\documentclass[12pt]{amsart}
\usepackage[marginratio=1:1,totalwidth=15.75cm,totalheight=22.275cm]{geometry}
\usepackage[english]{babel}
\usepackage[utf8]{inputenc}
\usepackage{amsmath,amssymb}
\usepackage{orcidlink}
\numberwithin{equation}{section}
\newtheorem{thm}{Theorem}[section]

\newtheorem{prop}[thm]{Proposition}

\newtheorem{con}[thm]{Conjecture}
\newtheorem{rem}[thm]{Remark}
\newtheorem{ques}[thm]{Question}
\newcommand{\C}{\mathbb{C}}
\newcommand{\Ch}{\widehat{\mathbb{C}}}

\newcommand{\eps}{\varepsilon}

\newcommand{\re}{\operatorname{Re}}

\newcommand*{\defeq}{\mathrel{\vcenter{\baselineskip0.5ex\lineskiplimit0pt
  \hbox{\scriptsize.}\hbox{\scriptsize.}}}=}
  
\newcommand{\DD}{\mathbb{D}}
\newcommand{\interior}{\operatorname{int}}
  
\usepackage[shortlabels]{enumitem}

\newcommand{\BU}{\operatorname{\mathit{BU}}}

\begin{document}

\author[D. Mart\'i-Pete]{David Mart\'i-Pete\orcidlink{0000-0002-0541-8364}}

\address{Department of Mathematical Sciences\\ University of Liverpool\\ Liverpool L69 7ZL\\ United Kingdom} 
\email{david.marti-pete@liverpool.ac.uk}

\author[L. Rempe]{Lasse Rempe\orcidlink{0000-0001-8032-8580}} 
\address{\noindent Department of Mathematics \\ The University of Manchester \\ Manchester \\ M13 9PL \\ UK}
\email{lasse.rempe@manchester.ac.uk}

\author[J. Waterman]{James Waterman\orcidlink{0000-0001-7266-0292}}
\address{Center for Computing Sciences\\ Bowie \\ MD 20715 \\ USA}
\email{jawater@super.org}

\subjclass[2020]{Primary 37F10; Secondary 30D05}

\date{\today}

\title{A counterexample to Eremenko's conjecture}
\begin{abstract}
The \emph{escaping set} of a transcendental entire function
consists of those points that tend to infinity under iteration.
Eremenko asked whether every connected component of this set 
is unbounded. In~\cite{MRW}, we constructed transcendental entire
functions for which the escaping set has prescribed compact connected components;
in particular, it may have a singleton component. This disproves the
conjecture. In this note, we give an introduction to the problem, aimed
at a general mathematical audience. We also sketch a variant
of the construction in which a prescribed half-strip is an oscillating
wandering domain and an escaping point lies on its boundary. The account
is based on the second author's lecture at the 2026
International Congress of Basic Science. 
\end{abstract}
\maketitle

\section{Introduction}
A \emph{discrete-time dynamical system} is 
described by a set $X$ (the \emph{state space}) and a function
$f\colon X\to X$ (the \emph{transition map}). If the system 
is in state $x\in X$ at a given moment in time, its next state is
given by $f(x)$. Starting at $x\in X$, the consecutive states of
the system follow the \emph{orbit}
\[ 
 x,\quad f(x),\quad f^2(x),\quad\ldots,
\]
where $f^n=f\circ\dots\circ f$ denotes the $n$-th iterate of $f$. 
Given a discrete-time dynamical system, a fundamental question is to
understand the long-term behaviour of orbits, and how points with 
different types of long-term behaviour are organised within the state space $X$.

In \emph{one-dimensional holomorphic dynamics}, the state of the 
system is described by a single complex variable. That is, the
state space $X$ is a \emph{Riemann
surface} (a connected one-dimensional complex manifold) and $f\colon X\to X$ is holomorphic. Interesting behaviour arises in the cases where
$X$ is either the complex plane $\C$, the Riemann sphere $\Ch$, or the
punctured plane $\C^*= \C\setminus\{0\}$, and $f$ is not a conformal isomorphism;
see~\cite[Sections~4--6]{Milnor}.%
\footnote{%
It is also interesting
to consider the dynamics of maps $f\colon U\to X$, where
$U$ is a proper subset of $X$; for example, $X=\Ch$ and $f\colon \C\to\Ch$, in which
case $f$ is called a \emph{meromorphic function}. Here we restrict
to the classical case where $f$ is a self-map of $X$.} 

Despite the apparently simple setting, many fundamental phenomena that occur
  in the general study of dynamical systems already appear in one-dimensional
  holomorphic dynamics, where powerful techniques are available to study them.
  Indeed, complex analysis, complex geometry, planar topology,
  number theory, algebraic geometry and dimension theory all have fruitful interactions with
  holomorphic dynamics.   
  
There are two distinct subfields of holomorphic dynamics: 
  \emph{rational dynamics}, where $X=\Ch$ and $f\colon \Ch\to\Ch$ is 
  a polynomial or a rational map, and \emph{transcendental dynamics}, where
  $X\subsetneq \Ch$ and $f$ has \emph{essential singularities} at the 
  boundary of its domain of definition. Rational dynamics 
  was founded in the early twentieth century by Fatou and Julia;
  we refer to \cite{Milnor} for an introduction. Transcendental dynamics
  was founded by Fatou in 1926~\cite{Fatou1926}. 
   As Fatou observed, the presence of an essential singularity 
  leads to significant differences from rational dynamics. Nonetheless, 
  there is a close interplay between rational and transcendental dynamics,
  with techniques developed in one field often finding applications 
  in the other. For a recent application of ideas from transcendental
  dynamics to \emph{local connectivity of the Mandelbrot set}, a
  central problem of polynomial dynamics, see~\cite{dudkolyubichpacman}.
  
 Like Fatou's paper~\cite{Fatou1926}, the work we describe in this note
  concerns the dynamics of transcendental entire functions 
    \[ f\colon\C\to\C, \]
   i.e., non-polynomial holomorphic self-maps of the complex plane. 
   Well-known examples of transcendental entire functions are given by
   exponential and trigonometric functions. 

Given such a function $f$, the state space -- i.e., the complex plane~-- splits naturally
into two dynamically defined sets. The \emph{Fatou
set} $F(f)$ is the set of starting values at which orbits are \emph{stable}
under small perturbations, while the \emph{Julia set} $J(f)$ 
is its complement. More formally, $F(f)$ is the largest open set on which the family 
$(f^n)_{n=0}^{\infty}$ of iterates is equicontinuous with respect to
spherical distance. The Julia set is always non-empty and uncountable; 
it contains points with dense orbits, and periodic points (points $z$ with
$f^n(z)=z$ for some $n\geq 1$) are also dense in
$J(f)$. (See, for example,~\cite{Bergweiler1993}.) In particular, the dynamics of $f$ 
on $J(f)$ is topologically \emph{chaotic} in the sense of Devaney~\cite{devaneychaos}.

\subsection*{The escaping set and Eremenko's conjecture}
Let $f\colon \C\to\C$ be holomorphic. The \emph{escaping set} of $f$ is an important object, defined by 
\begin{equation}\label{eq:escaping}
 I(f)\defeq\{z\in\C\colon f^n(z)\to\infty\text{ as }n\to\infty\}.
\end{equation}
If $f$ is a polynomial of degree at least $2$, 
then $I(f)$ is its 
\emph{basin of infinity}, an open set on which 
 the iterates converge to infinity locally uniformly. As observed by
 Douady and Hubbard, this basin of infinity is foliated by dynamically natural
 curves called \emph{external rays}. These rays, and partitions of the state
 space generated by rays together with their landing points, are 
 a crucial tool in polynomial dynamics. (See~\cite{milnorpuzzles} for an introduction.) 

From now on, we assume that $f$ is not a polynomial; that is, $f$ is transcendental. For such $f$, the escaping set is more complicated: it is never 
open, and indeed can never be an $F_{\sigma}$ set (a countable union of closed sets);
see~\cite{rempeFsigma}. Nonetheless, in many important cases it contains
structures, such as curves to infinity, that can be used to understand the global
dynamics. (For more background on the escaping set, we refer to the survey~\cite{BergweilerRempe2025}.)

Eremenko~\cite{Eremenko1989} initiated the systematic study of $I(f)$ in 1989.
He proved the following fundamental result.
\begin{thm}[Eremenko, 1989]\label{thm:eremenko}
If $f\colon\C\to\C$ is a transcendental entire function, then $I(f)\neq\emptyset$,
$J(f)=\partial I(f)$, and every connected component of the closure
$\overline{I(f)}$ is unbounded.
\end{thm}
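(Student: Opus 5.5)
The plan is to follow Eremenko's original strategy, which rests on Wiman--Valiron theory. Write $M(r)=\max_{|z|=r}|f(z)|$. Since $f$ is transcendental, $\log M(r)/\log r\to\infty$, so $M(r)>r$ for all large $r$. The first step is to produce a single escaping point that escapes \emph{fast}. Wiman--Valiron theory says that near a point $z_r$ with $|z_r|=r$ and $|f(z_r)|=M(r)$, and outside an exceptional set of $r$ of finite logarithmic measure, $f$ behaves like $z\mapsto (z/z_r)^{\nu(r)}f(z_r)$ on a small disc. Here $\nu(r)$ is the central index, which tends to infinity. Consequently, for non-exceptional $r$, the image $f(D(z_r,\,r/\nu(r)^{1/2+\delta}))$ covers an annulus $\{M(r)/C<|w|<CM(r)\}$ for a fixed constant $C>1$. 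Since $M(r)\gg r$, we can choose a sequence $r_0<r_1<\dots$ with $r_{n+1}\approx M(r_n)$, all non-exceptional, such that each disc $D_n$ around $z_{r_n}$ has image covering $D_{n+1}$. A nested-preimage (compactness) argument then gives a point $z$ with $f^n(z)\in D_n$ for all $n$. This $z$ lies in $I(f)$, so $I(f)\neq\emptyset$. The main technical obstacle is exactly this step: controlling the exceptional set and ensuring that $M(r_n)/C$ lands in a non-exceptional radius. I would handle it by using the finite logarithmic measure to perturb each $r_{n+1}$ within a factor $C^{1/2}$ of $M(r_n)$.

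For $J(f)=\partial I(f)$ the argument is more standard. First, $I(f)$ is completely invariant, meaning $z\in I(f)$ if and only if $f(z)\in I(f)$. Next, $I(f)$ is infinite: from the construction there are infinitely many starting points, or alternatively we can take preimages, using Picard's theorem for backward orbits. Finally, we use the blowing-up property of $J(f)$: for any open $U$ meeting $J(f)$, $\bigcup_n f^n(U)$ omits at most one point of $\C$. These facts give $J(f)\subset\overline{I(f)}$. On the other hand $I(f)\cap J(f)$ is nonempty, since it contains points whose orbits are not bounded away from the singularity in a normal way. To conclude I would argue as follows. If $U$ is a Fatou component meeting $I(f)$, then all limit functions on $U$ are $\infty$ by Vitali/normality, so $U\subset I(f)$. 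Hence $\operatorname{int}I(f)\subset F(f)$, and points of $F(f)\setminus I(f)$ have a neighbourhood disjoint from $I(f)$. Together these give $\partial I(f)=J(f)$. Showing $J(f)\subset \partial I(f)$ also requires $J(f)\not\subset \operatorname{int} I(f)$; this follows because repelling periodic points are dense in $J(f)$ and do not escape.

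For unboundedness of the components of $\overline{I(f)}$, I would use the fast escaping point from the first step. Its orbit satisfies $|f^n(z)|\ge R_n$, where $R_{n+1}=\mu(R_n)\to\infty$, and this property is robust: the set $A=\{z:|f^n(z)|\ge R_n\text{ for all }n\}$ is closed, lies in $I(f)$, and is nonempty. Suppose some component $K$ of $\overline{I(f)}$ were bounded. By a standard planar-topology separation lemma (Janiszewski/Zoretti), there is a Jordan curve $\gamma$ surrounding $K$ and disjoint from $\overline{I(f)}$. Then $\gamma$ lies in $F(f)$ and outside the escaping set, so by normality the iterates $f^n$ are uniformly bounded on $\gamma$ along a subsequence. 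The maximum principle then bounds $f^{n}$ on the interior of $\gamma$, contradicting the presence there of points of $I(f)\cap J(f)$ near $K$. The subtle point is that points of $K$ escape only non-uniformly. I would repair this by using the interior of $\gamma$: it contains a point of $J(f)$, so by blowing-up its images eventually meet the fast-escaping set, where $|f^n|$ exceeds any bound. This is incompatible with the subsequential uniform bound coming from the maximum principle.
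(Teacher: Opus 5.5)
The paper does not prove this theorem; it quotes it from Eremenko's 1989 paper. Your outline is essentially Eremenko's own argument: Wiman--Valiron theory to build a rapidly escaping orbit; complete invariance, blowing-up and density of periodic points for $J(f)=\partial I(f)$; and a separating Jordan curve plus the maximum principle for the components of $\overline{I(f)}$. It is sound, but several remarks in it are unjustified or unnecessary and should be cleaned up.

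\textbf{The last part.} The difficulty you flag there is not real.
\begin{itemize}
\item Since $\gamma$ is connected and contained in $F(f)$, it lies in a single Fatou component. That component is not escaping, because $\gamma\cap I(f)=\emptyset$.
\item So some subsequence $f^{n_k}$ has a finite limit there. This gives $|f^{n_k}|\le M$ on $\gamma$ and, by the maximum principle, on the bounded complementary component $G$ of $\gamma$.
\item $G$ is an open set containing $K\subset\overline{I(f)}$, so it contains some $w\in I(f)$. Then $f^{n_k}(w)\to\infty$ already contradicts $|f^{n_k}(w)|\le M$.
\end{itemize}
No uniform escape is required, so the set $A$ and the blowing-up ``repair'' can be deleted. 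The repair only produces another escaping point inside $\gamma$ anyway.

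\textbf{The claim $I(f)\cap J(f)\neq\emptyset$.} For the same reason, you only need points of $I(f)$ inside $\gamma$, not points of $I(f)\cap J(f)$. Your justification that $I(f)\cap J(f)\neq\emptyset$ ``since it contains points whose orbits are not bounded away from the singularity in a normal way'' is not an argument. Nothing in the statement as given depends on it, so drop it.

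\textbf{The middle part.} The fact that a Fatou component meeting $I(f)$ lies in $I(f)$ yields $\partial I(f)\subset J(f)$. It does not yield $\operatorname{int} I(f)\subset F(f)$. That inclusion, equivalently $J(f)\cap \operatorname{int} I(f)=\emptyset$, comes from density of non-escaping periodic points in $J(f)$, as you say at the end of that paragraph. The logic is fine once the two inclusions are attributed correctly.

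\textbf{The separation lemma.} Zoretti's theorem is stated for compact sets, while $\overline{I(f)}$ is unbounded. Apply it to $\overline{I(f)}\cap\overline{B}$ for a large disc $B$ containing a neighbourhood of $K$. Here $K$ is still a component, and a Jordan curve close to $K$ avoiding this compact set also avoids $\overline{I(f)}$.
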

He also stated that it is plausible that $I(f)$ itself has no bounded
connected components. This became known as \emph{Eremenko's conjecture}.
\begin{con}[Eremenko's conjecture]\label{con:eremenko}
Every connected component of the escaping set of a transcendental
entire function is unbounded.
\end{con}

 The escaping set and Eremenko's conjecture have strongly influenced the development of transcendental dynamics since at least the early 2000s. 
  For example, 
  Rippon and Stallard~\cite{RipponStallard2005}
   showed that the so-called \emph{fast escaping set} $A(f)$, introduced by Bergweiler and Hinkkanen~\cite{bergweiler-hinkkanen99}
     has only unbounded connected components. 
     The set $A(f)$ has now become a central object in transcendental dynamics, and also
     in the dynamics of \emph{quasiregular} functions (see~\cite[Sections~5.1~and~10.1]{BergweilerRempe2025}). 
     
 Let us mention four variants of the conjecture.
  \begin{enumerate}[(i)]
   \item \emph{Does the escaping set have at least one unbounded connected component?} This is true; indeed, it follows from the
     above-mentioned result of Rippon and Stallard on $A(f)$.\label{item:unboundedexistence}
   \item \emph{Is $I(f) \cup \{\infty\}$ connected?} This is also true, as established by Rippon and Stallard in~\cite{RipponStallard2011}.
      This statement seems very close to Eremenko's conjecture. However, we note that, for a set $X\subset\C$ that is 
      neither open nor closed, the condition that $X\cup\{\infty\}$ is connected is considerably weaker than that $X$ has only unbounded connected components.\label{item:connectedwithinfinity}
   \item \emph{Is $I(f)\cup \{\infty\}$ \emph{path-connected}?} This stronger version of the 
      question was also posed by Eremenko, and was called the \emph{strong version} of Eremenko's conjecture.
     In~\cite{RRRS}, it was shown that this statement is false, even within the \emph{Eremenko--Lyubich class} of transcendental entire functions
     with a bounded set of singular values. Bishop~\cite{bishop15} later showed that this example can even be realised in the \emph{Speiser class} of functions
     with only finitely many singular values. In fact, this question was one of the original motivations behind Bishop's ground-breaking technique of
     \emph{quasiconformal folding}.\label{item:rrrs}
   \item \emph{If $z\in I(f)$, is there an unbounded connected set $A\subset I(f)$ with $z \in A$ and such that $f^n\to \infty$ uniformly on $A$?} 
      This strengthening of Eremenko's conjecture was suggested to the second author by Eremenko in 2007 (personal 
      communication). This statement
      is also known to be false in the Eremenko--Lyubich and Speiser classes; see~\cite{arclike}.\label{item:uniformeremenko}
  \end{enumerate}
  
Note that the weaker versions~\ref{item:unboundedexistence} and~\ref{item:connectedwithinfinity} of Conjecture~\ref{con:eremenko} hold in general,
while its stronger variants~\ref{item:rrrs} and~\ref{item:uniformeremenko} fail even in restricted function classes. Conjecture~\ref{con:eremenko} itself 
has been verified in many 
settings (again, we refer to~\cite{BergweilerRempe2025} for details). 
Nonetheless, in~\cite[Theorem~1.2]{MRW},  we resolve the conjecture in the negative. In particular, we show the following. 
\begin{thm}[Mart\'i-Pete--Rempe--Waterman, 2025]\label{thm:main}
There exists a transcendental entire function $f$ such that $I(f)$ has a connected component consisting of a single point.
\end{thm}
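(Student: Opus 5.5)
The plan is to build $f$ by approximation theory, so that a chosen point $z_0$ escapes while every other point near $z_0$ either fails to escape or is cut off from $z_0$ inside $I(f)$ by closed curves of non-escaping points. Concretely, I would aim for the following structure: a nested sequence of Jordan curves $\gamma_1,\gamma_2,\ldots$ surrounding $z_0$, shrinking to $z_0$, each contained in $\C\setminus I(f)$. If this holds, the connected component of $I(f)$ containing $z_0$ lies inside every $\gamma_k$, so it is $\{z_0\}$. The natural way to keep $\gamma_k$ out of $I(f)$ is to place it in the basin of an attracting fixed point, or more flexibly in a (bounded-orbit) Fatou component. Since escaping points on the boundary of such a domain are needed anyway, I would let the annuli around $z_0$ belong to a single wandering domain or attracting basin whose closure accumulates on $z_0$. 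Oscillating wandering domains, whose orbits return to a bounded set infinitely often, fit this well, matching the variant described in the abstract.

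The construction would proceed by induction, using Runge/Arakelian-type approximation (or its quasiconformal-surgery version). Start with a disc $D$ around a point $p$ that we want mapped into itself, giving an attracting basin. At stage $k$, prescribe $f$ on finitely many disjoint compact sets:
\begin{enumerate}[(a)]
\item a thin closed annulus $A_k$ around $z_0$, of inner radius tending to $0$, to be mapped into $D$ so that $A_k\subset F(f)$ does not escape;
\item small discs $B_k$ around the points $z_0,f(z_0),\dots,f^{k-1}(z_0)$, mapped so that $f^k(z_0)=w_k$ with $|w_k|\to\infty$ rapidly.
\end{enumerate}
The orbit of $z_0$ must pass between the annuli. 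This is consistent because $z_0$ lies inside all $A_k$ while $A_k$ separates it from $\infty$, and $f$ is not required to be injective. Each approximation step introduces an error $\eps_k$, and I would choose the sequence $\eps_k$ summable and small enough that the limit $f=\lim f_k$ is entire and transcendental, keeps $D$ mapped into a compact subset of itself, keeps each $A_j$ mapped into $D$, and keeps the orbit of $z_0$ within prescribed neighbourhoods of the $w_k$. The last condition gives $z_0\in I(f)$.

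I expect the main obstacle to be compatibility of the infinitely many constraints near $z_0$. The annuli $A_k$ accumulate at $z_0$, and $f$ is continuous at $z_0$, so $f(A_k)\to f(z_0)$ as $k\to\infty$. It follows that $f(A_k)\subset D$ for all large $k$ forces $f(z_0)\in\overline D$. The orbit of $z_0$ therefore has to leave via a point of $\partial D$-like behaviour, which clashes with $D$ being absorbing. The first remedy I would try is to stop requiring $f(A_k)\subset D$ and require only that the orbit of $A_k$ be bounded on a subsequence, with $A_k$ following $z_0$ for $k$ steps before dropping back into a bounded region. That is, $A_k$ shadows the escaping orbit for a long time and then returns, which is exactly the oscillating wandering domain picture. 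Then $f^n(A_k)$ lies near $f^n(z_0)$ for $n<k$, which is consistent with continuity, and $f^k(A_k)$ lands in a fixed bounded set $K$ not containing $f^k(z_0)$. This requires $f^k$ to have enormous derivative near $z_0$, which fast growth of $f$ along the orbit provides. Making these shadow-then-return requirements hold simultaneously for all $k$ with a single limit function, controlling the errors from later approximations on earlier finite orbits, is the delicate part. For that I would rely on summable error bounds together with the expansion estimates near $f^n(z_0)$.
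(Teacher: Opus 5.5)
There is a genuine gap, and it breaks the whole strategy: no closed curve or annulus of non-escaping points can surround an escaping point.

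\textbf{Your first version.} This fails by the maximum principle. If $A_k$ is an annulus around $z_0$ with $f(A_k)\subset D$ and $f(\overline D)\subset D$, then $f^n(A_k)\subset D$ for all $n\ge 1$. Apply the maximum principle to $f^n$ on the bounded complementary component $\Omega_k\ni z_0$ of $A_k$, whose boundary lies in $A_k$. This gives $\lvert f^n(z_0)\rvert\le\sup_{w\in D}\lvert w\rvert$ for all $n$, so $z_0\notin I(f)$. A single annulus already contradicts (b); the issue is not the accumulation of infinitely many constraints, and the continuity problem you found is only a symptom.

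\textbf{Your remedy.} It fails for the same reason. If $f^k(A_k)$ lies in a fixed bounded set $K$ for every $k$, then $\lvert f^k(z_0)\rvert\le\max_{\partial\Omega_k}\lvert f^k\rvert\le\sup_{w\in K}\lvert w\rvert$ for every $k$, so the orbit of $z_0$ is bounded. Large derivatives of $f^k$ near $z_0$ cannot help, because $f^k(\Omega_k)$ always lies in the filled-in hull of $f^k(\partial\Omega_k)$.

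\textbf{The general obstruction.} Rippon and Stallard proved that $I(f)\cup\{\infty\}$ is connected. Hence no bounded closed subset of $\C\setminus I(f)$ can separate a point of $I(f)$ from $\infty$. So a family of nested Jordan curves in $\C\setminus I(f)$ shrinking to $z_0$ cannot exist for any transcendental entire $f$. The paper flags exactly this: $z_0$ must be separated from the other escaping points by \emph{unbounded} closed sets of non-escaping points.

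\textbf{What the paper does instead.} It takes $U$ to be a horizontal half-strip, which is a Jordan domain in $\Ch$, with nested closed half-strips $K_j$ such that $\bigcap_j K_j=\overline U$.
\begin{enumerate}[(i)]
\item Using Arakelyan's theorem, it builds $f$ so that $\overline U$ follows an oscillating itinerary through the strips $S_j,T_j$.
\item At the same time, the unbounded curves $\partial K_j$ are sent by $f^{N_j+1}$ into an invariant disc $D$, so their orbits are bounded. The maximum principle gives no contradiction here because the $K_j$ are unbounded.
\item Consequently $\overline U$ is a component of the set of points with unbounded orbits. Since $U\subset\BU(f)$, the component of $I(f)$ containing the escaping boundary point $\zeta$ lies in $\partial U\cap I(f)$.
\item The final ingredient, which has no counterpart in your plan, handles the escaping points that cannot be cut off by non-escaping sets. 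Since $U$ is an oscillating wandering domain, $\partial U\cap I(f)$ has harmonic measure zero (Rippon--Stallard). Because $U$ is a Jordan domain, this set is therefore totally disconnected, so $\{\zeta\}$ is a component of $I(f)$.
\end{enumerate}
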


The example is constructed using approximation theory. A key difficulty lies in controlling the global dynamics
 of the function $f$ sufficiently to ensure that some escaping point $z_0$ cannot be connected to infinity by a connected subset of $I(f)$. 
 Indeed, by the results of~\cite{RipponStallard2011}, there will necessarily be unbounded connected components of the escaping set arbitrarily close to 
 $z_0$, and $z_0$ cannot be separated from these using a bounded closed subset of the plane. (Recall that $I(f)\cup\{\infty\}$ is connected.) So one needs to ensure
 that $z_0$ can be separated from every other escaping point by a suitable  unbounded closed set of non-escaping points. 
 
Techniques that allow this kind of control arose from studying an a priori unrelated question in the topic of \emph{wandering domains} (see Section~\ref{sec:wandering}). 
The goal of this note is to explain how the two are related, and to outline a proof of Theorem~\ref{thm:main}. The proof we present differs from 
that given in~\cite{MRW} in two ways. Firstly, our function is constructed to have a wandering domain, with the escaping point in question
lying on its boundary. This follows an idea sketched in~\cite[Remark~7.4]{MRW}, in
contrast to the construction in~\cite[Theorem~7.1]{MRW}, and simplifies the proof somewhat (subject to citing known results about the
boundary dynamics of wandering domains). Secondly, in~\cite{MRW} the proof is presented in the context of a more general method for realising
wandering compact sets; here we sketch a direct proof of Theorem~\ref{thm:main} only, which again simplifies the presentation.

\subsection*{Basic notation}
 Throughout the paper, $\C$ denotes the complex plane, $\Ch$ the Riemann sphere and $\DD$ the unit disc. Boundaries and closures are considered to be
  taken in $\C$, unless explicitly mentioned otherwise. 

\section{Wandering domains}\label{sec:wandering}
Let $f$ be a transcendental entire function. The connected components of $F(f)$ are called \emph{Fatou components}.
Such a  Fatou component $U$ is \emph{wandering} if its
forward images are pairwise disjoint:
\[
 f^n(U)\cap f^m(U)=\emptyset\qquad(n\neq m).
\]
Sullivan~\cite{Sullivan1985} proved that 
rational maps do not have wandering domains. On the other hand, Baker had already shown in 1976 that
transcendental entire functions may have wandering domains
\cite{Baker1976}.

A wandering domain $U$ is \emph{escaping} if $U\subset I(f)$. It is
\emph{oscillating} if its points have unbounded orbits but do not tend
to infinity. More precisely, such points belong to the bungee set
\[
 \BU(f)\defeq\{z\in\C\setminus I(f)\colon
              \limsup_{n\to\infty}\lvert f^n(z)\rvert =\infty\}.
\]
The first examples of oscillating wandering domains were constructed by
Eremenko and Lyubich~\cite{EremenkoLyubich1987}. Their work introduced
approximation theory as a tool for constructing entire functions with
prescribed dynamical behaviour.

It is well known that all points in a wandering domain $U$ share the same behaviour: if $U\cap I(f)\neq \emptyset$, then $U\subset I(f)$. Likewise, if
$U\cap \BU(f)\neq \emptyset$, then $U\subset \BU(f)$. However, boundary points need not follow the same dynamics. Following
\cite[Definition~1.10]{MRW}, a boundary point whose accumulation behaviour differs from
that of interior points is called a \emph{maverick point} (with respect to $U$). For an
escaping wandering domain, these are precisely the non-escaping boundary
points. For an oscillating wandering domain, an escaping boundary point
is automatically maverick. Such exceptional points are rare from the
viewpoint of \emph{harmonic measure} (a conformally invariant notion of size for subsets of the boundary of a plane domain).

\begin{thm}[Rippon--Stallard, 2011]\label{thm:rippon-stallard}
 Let $f$ be a transcendental entire function.
 If $U$ is an escaping wandering domain of $f$, then $\partial U\cap I(f)$ has full harmonic measure relative to $U$. 
 If $U$ is an oscillating wandering domain of $f$, then $\partial U\cap I(f)$ has zero harmonic measure relative to $U$. 
\end{thm}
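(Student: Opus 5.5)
The plan is to pass to the unit disc and use radial limits. Fix $z_0\in U$ and a Riemann map $\phi\colon\DD\to U$ with $\phi(0)=z_0$. Harmonic measure on $\partial U$ relative to $U$ is then the push-forward of normalised Lebesgue measure on $\partial\DD$ under the radial extension of $\phi$. Since $U$ is a wandering domain, it is simply connected (Baker's theorem for multiply connected wandering domains aside, which concerns only the escaping case; in that case one argues with a universal covering instead). By Fatou's and Riesz's theorems, radial limits $\phi(\zeta)$ exist for almost every $\zeta\in\partial\DD$, and these limits lie in $\partial U$. The question is therefore whether $f^n(\phi(\zeta))\to\infty$ for almost every $\zeta$.

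The key step is to compare the boundary orbit with the interior orbit along the radius. Set $g_n\defeq f^n\circ\phi$. Each $g_n$ is a holomorphic map $\DD\to f^n(U)\subset U_n$, where $U_n$ is the Fatou component containing $f^n(U)$, so $g_n$ omits the Julia set (which contains at least two points). The usual approach is to use that $f^n$ is continuous on $\overline U$, so $f^n(\phi(\zeta))$ is the radial limit of $g_n$ at $\zeta$. One then applies a Löwner or Lindelöf-type principle: for $\zeta$ in a set of full measure, the harmonic measure of boundary sets is preserved under inner-function-like behaviour. Concretely, I would consider the sets
\[
E_R\defeq\{\zeta\in\partial\DD\colon |f^n(\phi(\zeta))|\le R \text{ for infinitely many } n\}
\]
and estimate their measure. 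This uses the harmonic function $u_n(w)\defeq\omega\bigl(g_n(w),\, \partial U_n\cap \overline{D(0,R)},\, U_n\bigr)$ pulled back to $\DD$. By subordination, the harmonic measure at $0$ of $\{\zeta\colon |g_n(\zeta)|\le R\}$ is at most $\omega(f^n(z_0), \overline{D(0,R)}\cap\partial U_n, U_n)$.

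In the escaping case, $f^n(z_0)\to\infty$. Because the hyperbolic distance between $f^n(z_0)$ and any fixed compact set inside the domain grows, the harmonic measure of $\overline{D(0,R)}$ seen from $f^n(z_0)$ tends to $0$. This decay needs to be summable, or at least summable along a subsequence combined with a monotonicity argument. To get this, I would use the standard estimate that escaping wandering domains have $|f^{n+1}(z_0)|\ge |f^n(z_0)|^{c}$ eventually, with geometric growth at the least, and then Borel--Cantelli. In the oscillating case, I would take $R$ with $|f^{n_k}(z_0)|\le R/2$ along a subsequence. Harnack's inequality on a hyperbolic disc of bounded radius then gives the reverse: the measure of $\{|g_{n_k}|\le R\}$ is bounded below. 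Passing to the event "$|f^{n}\phi|\le R$ infinitely often" then yields full measure via a zero-one law. The zero-one law would come from the ergodicity of the boundary map induced by inner functions (Doering--Mañé/Aaronson type), applied to the inner function associated to $f|_U$ after conjugating each $U_n$ to $\DD$.

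I expect the main obstacle to be the quantitative step in the escaping case: turning "$f^n(z_0)\to\infty$" into "the harmonic measure of a fixed bounded set tends to zero fast enough". The components $U_n$ vary, and their boundaries come arbitrarily close to the bounded set. I would first try to bypass summability: note that $\{\zeta\colon f^n(\phi(\zeta))\to\infty\}$ is a tail event for the non-autonomous inner-function sequence. Then I would show that its measure is positive using the decay of $u_n(0)$, and upgrade this to full measure by the zero-one law.
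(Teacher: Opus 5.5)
The paper gives no proof of this statement; it is quoted from Rippon--Stallard. So your argument has to stand on its own, and it has a genuine gap in the escaping case.

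Your subordination bound $\omega(z_0,\{\zeta\in\partial U\colon |f^n(\zeta)|\le R\},U)\le\omega(f^n(z_0),\partial U_n\cap\overline{D(0,R)},U_n)$ is fine; it uses $f^n(\partial U)\subset\partial U_n$. The trouble is that the non-escaping boundary points form $\bigcup_R\limsup_n\{\zeta\colon|f^n(\zeta)|\le R\}$. Single-time bounds control a $\limsup$ only if they are summable; convergence to $0$ is not enough.

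You never actually estimate the right-hand side. Your hyperbolic-distance heuristic is about a fixed compact set in a fixed domain, whereas here the domain $U_n$ and the boundary set both move with $n$, and $U_n$ may return close to $\overline{D(0,R)}$. The growth fact you invoke is false. For Herman's function $f(z)=z-1+e^{-z}+2\pi i$, the Fatou component of $0$ is an escaping wandering domain with $f^n(0)=2\pi i n$, so growth is only linear. In any case, the size of $|f^n(z_0)|$ alone gives no harmonic-measure bound in the varying domains $U_n$.

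The fallback zero-one law is not available either. Because $U$ wanders, there is no single inner function, only the non-autonomous sequence $g_n=\phi_{n+1}^{-1}\circ f\circ\phi_n$, and the Aaronson/Doering--Ma\~n\'e results do not apply to it. Its tail $\sigma$-algebra need not be trivial: if every $f\colon U_n\to U_{n+1}$ is a conformal isomorphism, the normalised $g_n$ are rotations and every Borel set is a tail event. Proving positive measure would also need the same $\limsup$ control.

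The missing ingredient is that the $U_n$ are \emph{pairwise disjoint}, which your argument never uses. Here is one way to use it.
\begin{itemize}
\item If $U$ is multiply connected, Baker's theorem gives $\operatorname{dist}(0,U_n)\to\infty$, so $\partial U\subset I(f)$.
\item If $U$ is simply connected, let $A=\{R<|z|<2R\}$ and $a_n=\operatorname{area}(U_n\cap A)$. Disjointness gives $\sum_n a_n\le 3\pi R^2$.
\item For $|f^n(z_0)|>2R$, join $f^n(z_0)$ to $\partial U_n$ by a curve $\sigma_n\subset U_n\setminus\overline{D(0,2R)}$; this is possible since $J(f)$ is unbounded.
\item Every curve in $U_n$ from $\sigma_n$ to $\partial U_n\cap\overline{D(0,R)}$ crosses $A$. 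By length--area, their extremal distance is at least $R^2/a_n$.
\item Beurling's estimate $\omega\le Ce^{-\pi\lambda}$ in the simply connected domain $U_n$ then gives $\omega(f^n(z_0),\partial U_n\cap\overline{D(0,R)},U_n)\le Ce^{-\pi R^2/a_n}\le Ca_n/(\pi R^2)$. This is summable, and Borel--Cantelli finishes.
\end{itemize}

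Your oscillating case also needs repair. Subordination gives only upper bounds, and Harnack's inequality cannot compare harmonic measures in the different domains $U_{n_k}$. Apply subordination to $\{|w|\ge R\}$ instead. Here $U_{n_k}$ is simply connected, and since $|f^{n_k}(z_0)|$ is bounded, so is $\operatorname{dist}(f^{n_k}(z_0),\partial U_{n_k})$. The same Beurling estimate then yields $\omega(z_0,\{\zeta\colon|f^{n_k}(\zeta)|\ge R\},U)\le CR^{-1/2}$ for all $k$. Hence the set of $\zeta$ with $|f^{n_k}(\zeta)|<R$ for infinitely many $k$ has harmonic measure at least $1-CR^{-1/2}$. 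Letting $R\to\infty$ gives the non-escaping points full measure, with no zero-one law needed.
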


More generally, maverick points on the boundary of a wandering domain 
 have harmonic measure zero~\cite[Theorem~1.11]{MRW} (see also~\cite[Theorem~1.3]{osborne-sixsmith16}), but we do not require this more general fact here.
 
\begin{rem}\label{rem:totally-disconnected} The only property of harmonic measure that 
   we will require is of a topological nature. 
    Suppose that $U\subset\C$ is a Jordan domain (a simply connected domain whose boundary in $\Ch$ is a simple closed curve), and let $E\subset \partial U$ have harmonic measure zero. Then $E$ is totally disconnected; that is, $E$ contains no non-degenerate connected subsets. 
    \end{rem}

 Prior to our work in~\cite{MRW}, it was not known whether a wandering domain could have maverick boundary points. Indeed, a question of Rippon (see~\cite[Problem~2.94]{hayman-lingham19}) asked whether there exists a bounded escaping wandering domain having a non-escaping boundary point. 
 One may also ask a variant of this question for oscillating wandering domains.
 
\begin{ques}\label{ques:oscillatingmaverick}
 Does there exist a transcendental entire function $f$ and an oscillating wandering domain $U$ such that $\partial U\cap I(f)\neq \emptyset$?
\end{ques}

The authors' work in~\cite{MRW} was originally motivated by these questions. We identified a construction, using approximation theory,
 that answered both questions. The vertical diameters (i.e., the maximal difference between imaginary parts) of the iterates of this wandering domain
 remain bounded, while the imaginary parts of interior orbits oscillate, having both finite and infinite limit points. On the other hand, the wandering domain
 stretches out in the horizontal direction, with the real parts of interior orbits remaining bounded, but those of a certain boundary point tending to infinity 
 (in the case of Question~\ref{ques:oscillatingmaverick}) or vice versa (for Rippon's question concerning escaping wandering domains). The wandering domain can be constructed to be either bounded (as in Rippon's question) or unbounded as a subset of the plane. 
 
 While working on this construction, we became aware of recent work~\cite{bocthaler} of Boc Thaler, who had used a method similar to ours to realise specific bounded domains 
  as wandering domains of transcendental entire functions; for example, he showed that there exists a transcendental entire function for which the
  unit disc $\DD$ is a wandering domain. We were able to combine his ideas with our method to extend Boc Thaler's result to realise more general
  examples of wandering domains (and, indeed, wandering continua), and to additionally prescribe certain boundary points to be maverick.
  (See~\cite[Theorem~1.7]{MRW} for a precise statement.) In particular, an application of these ideas to unbounded oscillating wandering domains yields the following.
  
  \begin{thm}\label{thm:maverick}
   There is a transcendental entire function $f$ having an unbounded oscillating wandering domain $U$ such that $U$ is a Jordan domain, 
       $\partial U\cap I(f)\neq\emptyset$, and such that furthermore $\overline{U}$ is a connected component of the set of 
       points whose orbits under $f$ are unbounded. 
  \end{thm}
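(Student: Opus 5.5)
We will prove Theorem~\ref{thm:maverick} by constructing $f$ through an inductive approximation scheme, in the spirit of Eremenko--Lyubich and Boc Thaler. The domain $U$ is prescribed in advance as a half-strip, say
\[ U = \{z : \re z > 0,\ |\operatorname{Im} z| < 1\}, \]
which is a Jordan domain in $\Ch$. The maverick point is prescribed as $z_0 = 0 \in \partial U$. We choose a sequence of disjoint translates $U_n = U + i t_n$, with $t_0=0$ and $t_n\to\infty$ rapidly, together with a sequence of "return" times. The aim is for $f$ to map $U_n$ approximately onto $U_{n+1}$ most of the time, but infinitely often to map some $U_n$ back near $U_0$ via translation. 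Interior orbits then have imaginary parts with both finite and infinite limit points, so they oscillate. The map is built as a limit $f=\lim f_k$ of entire functions. Each $f_k$ is obtained from $f_{k-1}$ by Arakelian/Runge-type approximation on a closed set consisting of finitely many of the $\overline{U_n}$ together with a large disc. The errors are chosen summable, so that the earlier dynamics is preserved on compact pieces.

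The key steps, in order, are as follows.

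\textbf{(1) Prescribing the model dynamics.} On each $\overline{U_n}$ we specify the model map $g$. It is a translation in the imaginary direction, so it keeps the vertical diameter bounded. It is combined with a horizontal contraction towards a fixed interior point, say $x\mapsto$ something fixing $1$ and attracting $\overline{U}\setminus\{0\}$ locally. At the boundary point $0$ (more precisely, along a boundary arc near $z_0$) we insert a mild horizontal expansion, so that the real part of the orbit of $z_0$ tends to $+\infty$ while staying on $\partial U_n$.

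\textbf{(2) Approximation with error control.} We apply approximation with errors $\eps_k$ decaying fast enough, and with interpolation at the orbit points of $z_0$. This guarantees that $f^n(z_0)$ lies exactly on $\partial U_n$ or its pullback and escapes horizontally, so $z_0\in I(f)$. It also guarantees that $f(\overline{U_n})\subset$ a thin neighbourhood of $U_{n+1}$, so that, by a standard normal-family argument (Montel), $U$ lies in a Fatou component.

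\textbf{(3) Identifying the component.} We show that this Fatou component is exactly $U$, and not larger. For this we build into the approximation set discs of non-escaping points, for example attracting basins of a fixed point, accumulating on every boundary point of $U_n$ except along the escaping direction. The resulting set $K$ of points with bounded orbits is a closed set surrounding $\overline{U}$ from outside. This forces the component of $F(f)$ to be $U$ itself. It also shows that $\overline{U}$ is separated from all other points with unbounded orbits, which gives the component statement. Wandering follows because the $U_n$ are disjoint, and oscillation follows from the return times.

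\textbf{(4) The escaping boundary point.} That $\partial U\cap I(f)\ne\emptyset$ follows from the interpolation in step~(2). This is consistent with Theorem~\ref{thm:rippon-stallard} and Remark~\ref{rem:totally-disconnected}.

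We expect the main obstacle to be step~(3): showing that the orbit of each point outside $\overline U$ but close to it is bounded, while the approximating functions remain entire. The closed approximation set must have connected complement in $\Ch$ (Arakelian's condition), so it cannot literally surround $U$. We would therefore first try to use a comb of thin closed "channels" of an attracting basin, entering each complementary gap and accumulating on $\partial U_n$ from outside. Each new stage $k$ would fill finer channels, with estimates controlled by the distortion of the translations.
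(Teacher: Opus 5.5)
There is a genuine gap, and it is where you expected it, in Step~(3). The remedy you propose cannot work.

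\textbf{Separation.} A comb of channels that never closes up does not separate $\overline{U}$ from anything. Points between the teeth can be joined to $\partial U$ by arcs avoiding the channels, and nothing controls their orbits. The set of bounded-orbit points you describe also cannot be closed, since it accumulates on $\partial U$; what you need is a \emph{sequence} of closed separating sets converging to $\partial U$.

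More fundamentally, no prescription of $f$ near $\partial U_n$ at a fixed time can achieve this. Suppose channels accumulating on $\partial U_n$ were sent into a neighbourhood of a bounded attractor at one fixed time, while $\partial U_n$ is sent near $\partial U_{n+1}$. Then continuity is violated. If instead they are sent there only later, you must track their orbits alongside that of $U$, which your proposal does not do. By continuity of $f^n$, points near $\partial U$ shadow its orbit for a long time. So bounded-orbit points accumulating on $\partial U$ must split off at times tending to infinity.

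Arakelyan's condition is not the real obstruction. Since $U$ is unbounded, a neighbourhood of a curve through $\infty$ surrounding a half-strip is admissible; the paper's $R_k\cup L_{k+1}$ has exactly this form.

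The missing idea is to separate \emph{in time}:
\begin{enumerate}[(a)]
\item Take closed half-strips $K_j$ with $K_{j+1}\subset\interior(K_j)$ and $\bigcap_j K_j=\overline{U}$, and keep $f^{N_j}$ univalent on a uniform neighbourhood of $K_j$.
\item Then $f^{N_j}(\partial K_j)$ and $f^{N_j}(K_{j+1})$ are disjoint subsets of the strip $T_j$, which the orbit of $K$ visits only once.
\item Only there is $f$ prescribed on a neighbourhood of the curve $f^{N_j}(\partial K_j)$, mapping it into a disc $D$ with $f(\overline{D})\subset D$, while $f^{N_j}(K_{j+1})$ continues its itinerary.
\end{enumerate}
The curves $\partial K_j$ have bounded orbits and accumulate on $\partial U$, so $\partial U\subset J(f)$ and the Fatou component is exactly $U$. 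They also separate $\overline{U}$ from every point outside $K_j$, which gives the component statement.

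\textbf{The maverick point.} Your mechanism in Steps~(1)--(2) does not work as described. Let $\gamma\subset\partial U$ be an arc from $z_0$ to $\infty$. If $f^n$ maps $\overline{U}$ conformally onto the closure of a domain close to the translate $U_n$, as your model intends, then the harmonic measure of $f^n(\gamma)$ seen from $f^n(\omega)$ is a fixed constant in $(0,1)$. But in a translate of $U$, seen from a point of bounded real part, the two boundary arcs joining a point of real part $R\to+\infty$ to $\infty$ have harmonic measures tending to $0$ and $1$. So the images of $U$ must become conformally far from translates, and interpolating the orbit of $z_0$ does not change this. Relatedly, a model that contracts $U_n$ towards an interior point cannot be approximately onto $U_{n+1}$, yet the image of a Fatou component fills the next one up to at most one point.

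In the paper the stretching has a different source. The point $\zeta$ lies in the \emph{interior} of $K_{k+1}$, and its hyperbolic distance to $\omega$ there tends to infinity as $K_{k+1}\to K$. This allows a conformal map on a neighbourhood of $f^{N_k}(K_{k+1})$, into the domain $V_{k+1}\subset S_0$ that $f^{k+1}$ maps onto $T_{k+1}$, such that:
\begin{enumerate}[(a)]
\item the image of $\zeta$ has real part $<-k$;
\item the subsequent iterates of $\omega$ keep real parts of absolute value at most $1$.
\end{enumerate}
The images of $U$ become topological half-strips whose tips $f^n(\zeta)$ recede to the left.

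\textbf{Returns and wandering.} Returning ``near $U_0$'' requires control of $f$ off $\overline{U_0}$, which your approximation sets do not provide. Also, once there are returns, wandering no longer follows from disjointness of the $U_n$. The paper instead keeps $f$ close, on whole strips $S_j$, to expanding affine maps fixing the imaginary axis. This supplies the returns to $S_0$, the disjointness of the images, and the monotonicity of real parts used for $\zeta$ and $\omega$.
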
 
  
 This not only answers Question~\ref{ques:oscillatingmaverick}, but also establishes Theorem~\ref{thm:main}. 
 
\begin{prop} 
 Suppose that $f$ is as in Theorem~\ref{thm:maverick}, and that $z_0\in \partial U\cap I(f)$. Then $\{z_0\}$ is a connected component of $I(f)$. 
\end{prop}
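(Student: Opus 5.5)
Let $C$ denote the connected component of $I(f)$ that contains $z_0$. The plan is to confine $C$ first to $\overline{U}$, then to $\partial U$, and finally to use the harmonic measure result to show that $C$ cannot contain more than one point.

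\emph{Step 1: $C\subset\overline{U}$.} Let $K$ denote the set of points whose orbits under $f$ are unbounded. Every escaping point has an unbounded orbit, so $I(f)\subset K$. Since $C$ is a connected subset of $K$ containing $z_0\in\overline{U}$, it lies in the connected component of $K$ containing $z_0$. By Theorem~\ref{thm:maverick}, this component is $\overline{U}$, so $C\subset\overline{U}$.

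\emph{Step 2: $C\subset\partial U$.} The domain $U$ is an oscillating wandering domain, so $U\subset\BU(f)$. The bungee set is disjoint from $I(f)$ by definition, hence $U\cap I(f)=\emptyset$. Combined with Step 1, this gives
\[ C\subset \partial U\cap I(f), \]
where $\partial U$ is taken in $\C$.

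\emph{Step 3: $C=\{z_0\}$.} By Theorem~\ref{thm:rippon-stallard}, the set $E\defeq\partial U\cap I(f)$ has zero harmonic measure relative to $U$. Since $U$ is a Jordan domain, Remark~\ref{rem:totally-disconnected} shows that $E$ is totally disconnected. The set $C$ is a connected subset of $E$ containing $z_0$, so it must be the singleton $\{z_0\}$.

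The only point requiring care is in Step 3. Here $U$ is unbounded, so its boundary in $\Ch$ is a Jordan curve through $\infty$, and $E$ is a subset of the finite part of that curve. Remark~\ref{rem:totally-disconnected} is stated in exactly this generality, with Jordan domains in $\Ch$ and $E\subset\partial U\subset\C$, so it applies directly. Otherwise the proof is purely topological: all the substantial work lies in Theorem~\ref{thm:maverick}, which provides both the existence of $z_0$ and the identification of $\overline{U}$ as a component of the set of points with unbounded orbits.
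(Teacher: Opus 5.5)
Your proof is correct and follows the paper's argument: the component of $I(f)$ containing $z_0$ lies in $\overline{U}$ because $\overline{U}$ is a component of the set of points with unbounded orbits, and it misses $U\subset\BU(f)$, so it sits inside $\partial U\cap I(f)$, which is totally disconnected by Theorem~\ref{thm:rippon-stallard} and Remark~\ref{rem:totally-disconnected}. One cosmetic point: the paper already uses $K$ for $\overline{U}$, so a different letter for the set of points with unbounded orbits would avoid a notational clash.
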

\begin{proof}
 Let $X$ be the connected component of $I(f)$ containing $z_0$. Then $X\subset \overline{U}$ by the final property in Theorem~\ref{thm:maverick}. 
  On the other hand, $U\subset \BU(f)\subset \C\setminus I(f)$. Hence 
   \[ X\subset \overline{U}\cap I(f) = \partial U \cap I(f), \]
   and the latter set is totally disconnected by Theorem~\ref{thm:rippon-stallard} and Remark~\ref{rem:totally-disconnected}.
   Since $X$ is connected and contains $z_0$, we conclude that $X=\{z_0\}$, as claimed. 
\end{proof} 

\section{Structure of the example} 

 The remainder of the paper is devoted to a sketch of the proof of Theorem~\ref{thm:maverick}. 
  We begin by choosing sequences $(S_j)_{j=0}^{\infty}$ and $(T_j)_{j=0}^{\infty}$ of open horizontal strips with equal heights and 
   pairwise disjoint closures, such that $T_0$ lies above $S_0$, $S_1$ lies above
  $T_0$, $T_1$ above $S_1$, and so on. Choose a horizontal half-strip $U$ with $K\defeq \overline{U}\subset T_0$, as well as points 
  $\zeta\in\partial U$ and $\omega\in U$. Finally, let $D$ be a disc whose closure lies below $\overline{S_0}$. 


\begin{figure}[t]
\centering
\begin{tikzpicture}[x=0.077\textwidth, y=0.077\textwidth, font=\small]

\definecolor{Sfill}{RGB}{236,237,241}   
\definecolor{Tfill}{RGB}{219,223,232}   
\definecolor{Kfill}{RGB}{250,219,172}   
\definecolor{Kline}{RGB}{173,111,22}
\definecolor{Ifill}{RGB}{200,222,244}   
\definecolor{Iline}{RGB}{40,98,158}

\def\XR{12.2}

\def\wig#1#2#3#4#5#6{%
  \fill[Ifill]
     plot[domain=#1:\XR,samples=150,smooth,variable=\x]
       ({\x},{#2-#3+#4*sin(#5*\x+#6)})
  -- plot[domain=#1:\XR,samples=150,smooth,variable=\x]
       ({\XR+#1-\x},{#2+#3+#4*sin(#5*(\XR+#1-\x)+#6)})
  -- cycle;
  \draw[Iline,line width=.45pt]
     plot[domain=#1:\XR,samples=150,smooth,variable=\x]
       ({\x},{#2-#3+#4*sin(#5*\x+#6)});
  \draw[Iline,line width=.45pt]
     plot[domain=#1:\XR,samples=150,smooth,variable=\x]
       ({\x},{#2+#3+#4*sin(#5*\x+#6)});
  \draw[Iline,line width=.8pt]
       (#1,{#2-#3+#4*sin(#5*#1+#6)}) -- (#1,{#2+#3+#4*sin(#5*#1+#6)});
}

\def\zpt#1#2#3#4#5{\fill (#1,{#2+#3*sin(#4*#1+#5)}) circle (1.15pt);}
\def\wpt#1#2#3#4#5{\draw[fill=white,line width=.45pt]
                       (#1,{#2+#3*sin(#4*#1+#5)}) circle (1.15pt);}

\def\fA{80}\def\aA{0.085}\def\pA{0}      
\def\fB{88}\def\aB{0.085}\def\pB{180}    
\def\fC{76}\def\aC{0.085}\def\pC{0}      
\def\fD{72}\def\aD{0.100}\def\pD{0}      
\def\fE{84}\def\aE{0.100}\def\pE{180}    

\def\cK{8.60}   \def\cI{8.12}   \def\cII{7.88}
\def\cIII{6.76} \def\cIV{5.84}  \def\cV{4.46}
\def\cVI{4.25}  \def\cVII{2.75} \def\cVIII{1.20}

\def\wK{8.85}
\def\wI{8.90}   \def\wII{8.72}
\def\wIII{8.94} \def\wIV{8.68}  \def\wV{8.82}
\def\wVI{8.76}  \def\wVII{8.88} \def\wVIII{8.70}

\def\yK{2.14}
\def\yI{0.97}  \def\yIII{0.61} \def\yVI{0.25}   
\def\yIV{4.05} \def\yVII{3.69}                  
\def\yVIII{7.13}                                
\def\yII{5.22}                                  
\def\yV{8.30}                                   

\def\tK{0.120}\def\tOne{0.105}\def\tTwo{0.078}\def\tThree{0.056}

\begin{scope}
\clip (0,-0.14) rectangle (11.5,8.98);

\fill[Sfill] (-1,0.00) rectangle (13,1.20);
\fill[Tfill] (-1,1.54) rectangle (13,2.74);
\fill[Sfill] (-1,3.08) rectangle (13,4.28);
\fill[Tfill] (-1,4.62) rectangle (13,5.82);
\fill[Sfill] (-1,6.16) rectangle (13,7.36);
\fill[Tfill] (-1,7.70) rectangle (13,8.90);
\foreach \yb/\yt in {0.00/1.20,1.54/2.74,3.08/4.28,4.62/5.82,6.16/7.36,7.70/8.90}{%
  \draw[gray!45,line width=.3pt] (-1,\yb)--(13,\yb);
  \draw[gray!45,line width=.3pt] (-1,\yt)--(13,\yt);}

\wig{\cI}{\yI}{\tOne}{\aA}{\fA}{\pA}          
\wig{\cIII}{\yIII}{\tTwo}{\aA}{\fA}{\pA}      
\wig{\cVI}{\yVI}{\tThree}{\aA}{\fA}{\pA}      
\wig{\cIV}{\yIV}{\tTwo}{\aB}{\fB}{\pB}        
\wig{\cVII}{\yVII}{\tThree}{\aB}{\fB}{\pB}    
\wig{\cVIII}{\yVIII}{\tThree}{\aC}{\fC}{\pC}  
\wig{\cII}{\yII}{\tOne}{\aD}{\fD}{\pD}        
\wig{\cV}{\yV}{\tTwo}{\aE}{\fE}{\pE}          

\fill[Kfill] (\cK,{\yK-\tK}) rectangle (13,{\yK+\tK});
\draw[Kline,line width=.55pt] (\cK,{\yK-\tK})--(13,{\yK-\tK});
\draw[Kline,line width=.55pt] (\cK,{\yK+\tK})--(13,{\yK+\tK});
\draw[Kline,line width=1pt]   (\cK,{\yK-\tK})--(\cK,{\yK+\tK});

\zpt{\cI}{\yI}{\aA}{\fA}{\pA}         \wpt{\wI}{\yI}{\aA}{\fA}{\pA}
\zpt{\cIII}{\yIII}{\aA}{\fA}{\pA}     \wpt{\wIII}{\yIII}{\aA}{\fA}{\pA}
\zpt{\cVI}{\yVI}{\aA}{\fA}{\pA}       \wpt{\wVI}{\yVI}{\aA}{\fA}{\pA}
\zpt{\cIV}{\yIV}{\aB}{\fB}{\pB}       \wpt{\wIV}{\yIV}{\aB}{\fB}{\pB}
\zpt{\cVII}{\yVII}{\aB}{\fB}{\pB}     \wpt{\wVII}{\yVII}{\aB}{\fB}{\pB}
\zpt{\cVIII}{\yVIII}{\aC}{\fC}{\pC}   \wpt{\wVIII}{\yVIII}{\aC}{\fC}{\pC}
\zpt{\cII}{\yII}{\aD}{\fD}{\pD}       \wpt{\wII}{\yII}{\aD}{\fD}{\pD}
\zpt{\cV}{\yV}{\aE}{\fE}{\pE}         \wpt{\wV}{\yV}{\aE}{\fE}{\pE}
\fill (\cK,\yK) circle (1.15pt);                                  
\draw[fill=white,line width=.45pt] (\wK,\yK) circle (1.15pt);     
\end{scope}

\node[left=3pt] at (0,0.60) {$S_0$};
\node[left=3pt] at (0,2.14) {$T_0$};
\node[left=3pt] at (0,3.68) {$S_1$};
\node[left=3pt] at (0,5.22) {$T_1$};
\node[left=3pt] at (0,6.76) {$S_2$};
\node[left=3pt] at (0,8.30) {$T_2$};
\node at (5.75,9.22) {$\vdots$};

\node at (10.55,\yK) {$K$};
\node[left=3.5pt]  at (\cK,\yK) {$\zeta$};
\node[above=3.5pt] at (\wK,\yK) {$\omega$};

\node[font=\footnotesize,left=4pt] at (\cI,   {\yI   +\aA*sin(\fA*\cI   +\pA)}) {$f(K)$};
\node[font=\footnotesize,left=4pt] at (\cIII, {\yIII +\aA*sin(\fA*\cIII +\pA)}) {$f^{3}(K)$};
\node[font=\footnotesize,left=4pt] at (\cVI,  {\yVI  +\aA*sin(\fA*\cVI  +\pA)}) {$f^{6}(K)$};
\node[font=\footnotesize,left=4pt] at (\cIV,  {\yIV  +\aB*sin(\fB*\cIV  +\pB)}) {$f^{4}(K)$};
\node[font=\footnotesize,left=4pt] at (\cVII, {\yVII +\aB*sin(\fB*\cVII +\pB)}) {$f^{7}(K)$};
\node[font=\footnotesize,left=4pt] at (\cVIII,{\yVIII+\aC*sin(\fC*\cVIII+\pC)}) {$f^{8}(K)$};
\node[font=\footnotesize,left=4pt] at (\cII,  {\yII  +\aD*sin(\fD*\cII  +\pD)}) {$f^{2}(K)$};
\node[font=\footnotesize,left=4pt] at (\cV,   {\yV   +\aE*sin(\fE*\cV   +\pE)}) {$f^{5}(K)$};

\end{tikzpicture}
\caption{The closed half-strip $K=\overline{U}\subset T_0$ and its images
under~$f$.  Each image is again a topological half-strip extending to the right, and
the images are pairwise disjoint.  Thus $K$ passes through the strips in
the order $S_0$, $T_1$, $S_0$, $S_1$, $T_2$, $S_0$, $S_1$, $S_2$,
$T_3,\dots$, entering each $T_j$ exactly once, at time $N_j$, and
returning to each $S_j$ infinitely often.  The
solid dots mark $\zeta$ and its images, whose real parts tend to
$-\infty$, and the open dots mark $\omega$ and its images, whose real
parts remain bounded.}
\label{fig:strips}
\end{figure}

  To prove Theorem~\ref{thm:maverick}, we will construct a transcendental entire function $f$ with the following properties. (See Figure~\ref{fig:strips}.)
  \begin{enumerate}[(1)]
   \item $f(\overline{D})\subset D$.\label{item:Dinvariant}
   \item For every $j\geq 0$, the strip $S_j$ contains an open subset $W_j$ that is mapped conformally onto a horizontal strip containing both $S_{j+1}$ and $T_{j+1}$.
      \label{item:univalentstripmaps}
   \item Under iteration of $f$, the closed half-strip
        $K$ maps through the strips chosen above, in the order $S_0$, $T_1$, $S_0$, $S_1$, $T_2$, $S_0$, $S_1$, $S_2$, $T_3$ etc. That is,
       if we set 
        \[ N_j \defeq \sum_{\ell=1}^j ( \ell + 1 ) = \frac{ j\cdot (j+3)}{2}, \]
        then $f^{N_j}(K) \subset T_j$ and $f^{N_j+k}(K)\subset S_{k-1}$ for $k=1,\dots,j+1$.\label{item:mappingK}
   \item $\re f^n(\zeta)< -j$ for all $N_j < n \leq N_{j+1}$; \label{item:Rezeta}
   \item $\lvert \re f^{n}(\omega)\rvert < 1$ for all $n$.\label{item:Reomega}
  \end{enumerate}
  
 By property~\ref{item:mappingK}, all points in $K$ have unbounded orbits; since periodic points are dense in $J(f)$ and $U\subset K$ is open, it follows that $U\subset F(f)$. 
   Furthermore, by~\ref{item:mappingK}, applied with $k=1$, and \ref{item:Reomega}, the sequence $f^{N_j+1}(\omega)$ is bounded, and hence
   $\omega\in\BU(f)$.     It is known that any Fatou component that contains points of $\BU(f)$ must be a wandering domain, 
  so the Fatou component $\tilde{U}$ containing $U$ is entirely contained in $\BU(f)$ (recall the discussion in
  Section~\ref{sec:wandering}). On the other hand, property~\ref{item:Rezeta} implies that
  $\lvert f^n(\zeta)\rvert\to \infty$ as $n\to\infty$. 
  So $\zeta\in I(f)$ and thus $\zeta\in \partial U\setminus \tilde{U} \subset \partial \tilde{U}$. (In particular, $\zeta$ is maverick for $\tilde{U}$.) 
  To guarantee $\tilde{U}=U$, we also ensure that there is a sequence $(K_j)_{j=0}^{\infty}$ of
  closed half-strips with $K = \bigcap_{j=0}^\infty K_j$ and $K_{j+1}\subset \interior(K_j)$ such that the following hold. 
  
  \begin{enumerate}[(1),resume]
   \item $f^{N_j}(K_j)\subset T_j$ for all $j\geq 0$, and $f^{N_j+k}(K_{j+1})\subset S_{k-1}$ for $k=1,\dots,j+1$.
   \item $f^{N_j}$ is univalent in a uniform Euclidean neighbourhood of $K_j$ (that is, on an open set that contains all points at Euclidean distance at most $\eps$ from $K_j$, for
   some $\eps>0$).
   \item $\overline{f^{N_j+1}(\partial K_j)}\subset D$.\label{item:boundaryinD}
  \end{enumerate}
  
  In other words, the half-strip $K_j$, which is slightly larger than $K$, maps in the same way as $K$ for the first $N_j$ steps, at which point it is contained in the 
  strip $T_j$. Under one further iteration, the boundary of $K_j$ is mapped into the invariant disc $D$, while the slightly smaller set 
  $f^{N_j}( K_{j+1})$ maps univalently into $S_0$, and so on. 
  
  Observe that this implies that $\partial U\subset J(f)$. Indeed, each point of $\partial U = \partial K$ is accumulated on by points of $\partial K_j$, whose orbits
  remain bounded while those on $K$ are unbounded. Hence the family of iterates of $f$ is not equicontinuous at any point of $\partial K$. Moreover, any
  point of $\C\setminus K$ is separated from $K$ by $\partial K_j$ for sufficiently large $j$. Thus (again since these boundaries have 
  bounded orbits), $K=\overline{U}$ is a connected component of the set of points with unbounded orbits.
  
  We conclude that any 
  transcendental entire function $f$ with properties~\ref{item:Dinvariant}--\ref{item:boundaryinD} satisfies the conclusion of Theorem~\ref{thm:maverick}. 

\section{Approximation and an inductive construction}
The principal analytic tool for constructing the desired function $f$ is Arakelyan's approximation theorem~\cite{Arakelyan1964,Gaier}. We
state the special case that is needed here.
\begin{thm}[Arakelyan, 1964]\label{thm:arakelian}
Let $A\subset\C$ be closed, and suppose that the set 
$(\C\setminus A)\cup\{\infty\}$ is connected and locally connected at
infinity. If $g\colon A\to\C$ is continuous on $A$ and holomorphic on
its interior, then, for every $\eps>0$, there exists an entire function
$f\colon\C\to\C$ such that
\[
 \lvert f(z)-g(z)\rvert <\eps
\]
for all $z\in A$. 
\end{thm}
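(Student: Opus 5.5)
Since this is a classical result that the paper quotes (with references to~\cite{Arakelyan1964,Gaier}), I would reconstruct the standard proof. It combines an exhaustion of $A$ by compact pieces with Mergelyan's theorem and a localisation (Vitushkin-type) argument. The error $\eps$ is split as $\sum_n \eps 2^{-n-1}$. We then build entire functions $f_n$ inductively such that
\[
\lvert f_n-g\rvert<\eps/2 \text{ on } A\cap\overline{D(0,R_n)},\qquad \lvert f_{n}-f_{n-1}\rvert<\eps 2^{-n-1}\text{ on } \overline{D(0,R_{n-1})}\cup\bigl(A\cap \overline{D(0,R_{n})}\bigr),
\]
for a suitable sequence $R_n\to\infty$. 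The limit $f=\lim f_n$ is then entire, because the convergence is locally uniform. It also satisfies $\lvert f-g\rvert<\eps$ on all of $A$.

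The first step is to extract, from the topological hypothesis, the property that makes this induction possible. The condition that $(\C\setminus A)\cup\{\infty\}$ is connected and locally connected at $\infty$ should give the following: for every $R$ there is $R'>R$ such that every bounded complementary component of $A\cup\overline{D(0,R)}$ is contained in $D(0,R')$. I would call this the ``bounded holes'' property. I would prove it by contradiction. If holes of $A\cup\overline{D(0,R)}$ reached arbitrarily far out, the complement of $A$ near $\infty$ would be cut into pieces that can only be joined through the bounded set $D(0,R)$, and this contradicts local connectivity at $\infty$. With this property, the compact set $L_n\defeq \overline{D(0,R_{n-1})}\cup (A\cap \overline{D(0,R_n)})$ can be arranged to have connected complement, by choosing $R_n$ outside the holes and slightly adjusting so the circle $\lvert z\rvert=R_n$ creates no new holes. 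Then Mergelyan's theorem applies on $L_n$ to any function that is continuous on $L_n$ and holomorphic on its interior.

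The main obstacle is that the natural target on $L_n$, namely $f_{n-1}$ on the disc and $g$ on the rest of $A$, is discontinuous where the two regions meet. There $f_{n-1}$ and $g$ differ by up to $\eps/2$, which is not small compared with the allowed increment $\eps 2^{-n-1}$. I would handle this as Arakelyan and Gaier do. Using a partition of unity, form $h=\chi f_{n-1}+(1-\chi)g$ with $\chi$ a smooth cutoff supported near $\overline{D(0,R_{n-1})}$. Then correct $h$ by the Cauchy transform
\[
\frac{1}{\pi}\int \frac{\bar\partial h(w)}{w-z}\,dA(w).
\]
Its density $\bar\partial h=(f_{n-1}-g)\bar\partial\chi$ is supported in an annulus off $A^\circ$, and it is small once $g$ is first replaced by a nearby function that is holomorphic on a neighbourhood, obtained via Mergelyan locally. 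Next, the poles of the resulting rational-type correction, which lie in $\C\setminus A$, are pushed to $\infty$ along the complement, using its connectedness and the bounded-holes property. Each step then costs only $\eps 2^{-n-1}$. This localisation and pole-pushing step is where essentially all the analytic work lies; everything else is bookkeeping of the errors.
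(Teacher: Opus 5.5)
The paper gives no proof of this statement; it is quoted from Arakelyan and Gaier. Your sketch therefore has to stand on its own, and it breaks at exactly the step you flag as the crux.

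\textbf{The gap.} In your induction, $f_n$ must stay within $\eps 2^{-n-1}$ of $f_{n-1}$ on $\overline{D(0,R_{n-1})}$ while following $g$ on $A$ further out. Near $\lvert z\rvert=R_{n-1}$ these two targets differ on $A$ by an amount of order $\eps$. The cutoff-plus-Cauchy-transform step does not remove this discrepancy; it only relocates it.
\begin{itemize}
\item The density $\bar\partial h=(f_{n-1}-g)\,\bar\partial\chi$ lives on $A^\circ$, not off it, since $\chi$ cannot be locally constant on $A$ in general. Replacing $g$ by a nearby holomorphic $\tilde g$ does not make $f_{n-1}-\tilde g$ small.
\item The Cauchy transform of this density is therefore of the same size as $f_{n-1}-g$, whatever the width of the transition region. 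The bound $\lVert T(\phi\,\bar\partial\chi)\rVert_\infty\lesssim\lVert\phi\rVert_\infty$ is scale invariant.
\item Concretely, if $f_{n-1}-g\equiv c$ on a neighbourhood of the transition annulus lying in $A^\circ$, the correction is exactly $c\chi$. The corrected target on the old disc is then $f_{n-1}-c$, so $f_n-f_{n-1}$ is $O(\eps)$ there and the increments are not summable.
\item Because the density sits on $A^\circ$, its Cauchy transform is not a rational-type function with poles in $\C\setminus A$. There is nothing to push to $\infty$.
\end{itemize}
There are also two smaller slips. First, you cannot ask for $\lvert f_n-f_{n-1}\rvert<\eps 2^{-n-1}$ on $A\cap\overline{D(0,R_n)}$, because $f_{n-1}$ bears no relation to $g$ beyond $R_{n-1}$. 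Second, $L_n$ inherits the holes of $A\cup\overline{D(0,R_{n-1})}$ whatever $R_n$ is. These holes must be filled in, which is harmless since $f_{n-1}$ is entire.

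\textbf{The missing idea.} Localise $g$ itself rather than the discrepancy, so that two approximants never have to be matched across an interface. Your bounded-holes property is the right topological input for this.
\begin{enumerate}
\item Extend $g$ continuously to $\tilde g$ on $\C$. Take a locally finite partition of unity $(\psi_n)$ subordinate to annuli, and set $G_n=\psi_n\tilde g-T(\tilde g\,\bar\partial\psi_n)$, where $T\mu(z)=\frac1\pi\int\frac{\mu(w)}{z-w}\,dA(w)$.
\item Then $\bar\partial G_n=\psi_n\,\bar\partial\tilde g$. So $G_n$ is continuous on $\Ch$, vanishes at $\infty$, and is holomorphic off the compact set $\operatorname{supp}\psi_n\setminus A^\circ$.
\item Let $H_n$ be $\overline{D(0,\rho_n)}$ together with the bounded complementary components of $A\cup\overline{D(0,\rho_n)}$. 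Choose it disjoint from $\operatorname{supp}\psi_n$, with $\rho_n\to\infty$.
\item Approximate $G_n$ to within $\eps 2^{-n-2}$ on all of $A\cup H_n$ by an entire function $e_n$. First use Mergelyan, applied on the sphere, to get a rational function with poles off $A\cup H_n$. Then push those poles to $\infty$.
\item Pole-pushing is legitimate on an unbounded set because the re-expansion of $(z-a)^{-1}$ about a nearby centre $b$ converges uniformly on the whole of $\{\lvert z-b\rvert\ge 2\lvert a-b\rvert\}$. Bounded holes let you keep pushing outward inside unbounded complementary components of ever larger filled discs, so the limit is entire.
\item Finally, $F=\tilde g-\sum_n(G_n-e_n)$ converges locally uniformly and satisfies $\bar\partial F=\bar\partial\tilde g-\sum_n\psi_n\,\bar\partial\tilde g=0$. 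Hence $F$ is entire, and $\lvert F-g\rvert\le\eps/2$ on $A$.
\end{enumerate}
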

The theorem allows simultaneous
approximation on unbounded sets, which is essential for our construction of unbounded wandering domains. 

A single application of Theorem~\ref{thm:arakelian} is not sufficient.
Instead, we construct $f$ inductively as the locally uniform limit of a sequence of entire functions $(f_k)_{k=0}^{\infty}$.
 Here $f_k$ exhibits the desired mapping behaviour up to and including the strip~$T_k$. More precisely,
 $f_k$ satisfies~\ref{item:Dinvariant}, it satisfies~\ref{item:univalentstripmaps} for~$j\leq k$, and it satisfies 
 the remaining properties for iterates up to and including time $N_k$. 
 
A key observation is the following. If $f_{k+1}$ is chosen sufficiently close to $f_k$ on a lower half-plane $\Delta_k$ containing $T_k$ and all previous strips,
 then $f_{k+1}$ still exhibits the desired mapping behaviour up to and including the strip $T_k$.
 (Strictly speaking, this requires some additional assumptions, concerning the size of $f_k'$ near the sets in question. We refer to~\cite{MRW} for details.) 
 
 To define such an $f_{k+1}$, we choose the half-strip $K_{k+1}\subset K_k$ such that its boundary is very close to that of $K$; this means that the hyperbolic distance in  $\interior(K_{k+1})$ between
 $\zeta$ and $\omega$ can be chosen as large as we want (see below). Then we 
   define $f_{k+1}$ by applying Arakelyan's theorem to a map $g_{k+1}$. This map is defined on the union of $\overline{\Delta_k}$, 
   the strip $\overline{S_{k+1}}$, 
  a closed neighbourhood $R_{k}$ of the curve $f_{k}^{N_k}( \partial K_k)$, and a closed neighbourhood $L_{k+1}$ of $f_{k}^{N_k}( K_{k+1})$. 
  Let us define this union by $A_{k+1}$.
  
  The map $g_{k+1}$ is defined to agree with $f_k$ on $\overline{\Delta_k}$, and to map $R_k$ compactly into the disc $D$. On $\overline{S_{k+1}}$, we choose
   $g_{k+1}$ to be a complex affine map $\alpha_{k+1}$ 
   that maps $S_{k+1}$ to a horizontal strip containing both $\overline{S_{k+2}}$ and $\overline{T_{k+2}}$, and that
   fixes the imaginary axis. 
   
   The definition on $L_{k+1}$ is 
   the most subtle~-- and, crucially, depends inductively on the function $f_k$. 
      By assumption on $f_k$, there is a domain $V_{k+1}\subset S_0$ such that $f_k^{k+1}$ maps $V_{k+1}$ conformally onto $T_{k+1}$. We define
      $g_{k+1}|_{L_{k+1}}$ to be a conformal map that takes values in $V_{k+1}$. If $K_{k+1}$ was chosen
      so that the hyperbolic distance in $\interior(K_{k+1})$ between $\zeta$ and $\omega$ is sufficiently large,
      we can choose this conformal map in such a way that 
       \[ \re g_{k+1}^{N_k+1}(\zeta) =  \re g_{k+1}(f_k^{N_k}(\zeta))<-k \]
        and 
         \[ \lvert \re g_{k+1}^{N_{k+1}}(\omega)\rvert  = \lvert \re f_k^{k+1}( g_{k+1}(f_k^{N_k}(\omega)))\rvert \leq 1 .\]
         
   For $j=0,\dots,k$, consider the set $f_k^j(V_{k+1})\subset S_j$. By induction, on this set $f_k$ is close to the expanding affine map $\alpha_j$ fixing the imaginary axis, and hence it is easy to ensure that 
     $f_k$ maps any point of $g_{k+1}(L_{k+1})$ at real part at least $1$ to a point with larger real part, and any such point at real part
     at most $-1$ to a point with more negative real part. From this, it follows that $g_{k+1}$, and any entire function $f_{k+1}$ sufficiently close to it on
     $A_{k+1}$, has the desired mapping properties up to and including the strip $T_{k+1}$, and the induction can continue. 
     
   We must also anchor the recursion by defining $f_0$ to be a function that is close to an affine map 
   $\alpha_0$ as above on $S_0$ and such that 
   $f_0(\overline{D})\subset D$. Again, such a function is readily obtained by an application of Arakelyan's theorem.
     
  If $f_{k+1}$ approximates $g_{k+1}$ (and hence $f_k$) on $\Delta_k$ with error at most $2^{-k}$, then the maps 
    $(f_k)_{k=k_0}^{\infty}$ form a Cauchy sequence on $\Delta_{k_0}$ for every $k_0$. Hence $(f_k)$ converges locally uniformly on $\C$ 
    to the desired function $f$. The function is transcendental because polynomials do not have bungee points (any unbounded orbit for a polynomial
    is escaping). This concludes our sketch of the construction of $f$, and hence of the proof of Theorems~\ref{thm:main} and~\ref{thm:maverick}.
 
\section*{Authors' statement on the use of generative AI}
Generative AI (Microsoft 365 Copilot) was used to 
 assist with drafting and language
 refinement. Claude Opus 5 also provided minor copy-editing suggestions on a draft of the paper,
 and created Figure~\ref{fig:strips} according to the authors' instructions.
 The authors are fully responsible for the content,
 and AI tools did not supply new mathematical arguments or write substantial parts of the finished article. 

\bibliographystyle{amsalpha}
\bibliography{MRW-FSA-Summary-ICBS-2026}
 \end{document}